\newtheorem{theorem}{Theorem}
\newtheorem{proposition}{Proposition}
\newtheorem{remark}{Remark}
\begin{document}

\begin{frontmatter}
\title{The value at the mode in multivariate $t$ distributions: a curiosity or not?}
\runtitle{The value at the mode in multivariate $t$ distributions: a curiosity or not?}

\begin{aug}
\author{\fnms{Christophe} \snm{Ley}\ead[label=e1]{chrisley@ulb.ac.be}}
\and
\author{\fnms{Anouk} \snm{Neven}\ead[label=e2]{anouk.neven@uni.lu}}

\address{Universit\'e Libre de Bruxelles, ECARES and D\'epartement de
Math\'ematique, Campus Plaine, boulevard du Triomphe, CP210, \\B-1050, Brussels, Belgium\\
Universit\'e du Luxembourg, UR en math\'ematiques, Campus Kirchberg, 6, rue Richard Coudenhove-Kalergi, \\L-1359, Luxembourg, Grand-Duchy of Luxembourg\\
\printead{e1,e2}}

\runauthor{C. Ley and A. Neven}

\affiliation{Universit\'e Libre de Bruxelles and Universit\'e du Luxembourg}

\end{aug}

\begin{abstract}
It is a well-known fact that multivariate Student $t$ distributions converge to multivariate Gaussian distributions as the number of degrees of freedom $\nu$ tends to infinity, irrespective of the dimension $k\geq1$. In particular, the Student's value at the mode (that is, the normalizing constant obtained by evaluating the density at the center) $c_{\nu,k}=\frac{\Gamma(\frac{\nu+k}{2})}{(\pi \nu)^{k/2} \Gamma( \frac{\nu}{2})}$ converges towards the Gaussian value at the mode $c_k=\frac{1}{(2\pi)^{k/2}}$. In this note, we prove a curious fact: $c_{\nu,k}$ tends monotonically to $c_k$ for each $k$, but the monotonicity changes from increasing in dimension $k=1$ to decreasing in dimensions $k\geq3$ whilst being constant in dimension $k=2$. A brief discussion raises the question whether this \emph{a priori} curious finding is a curiosity, \emph{in fine}.
\end{abstract}

\begin{keyword}[class=AMS]
\kwd[Primary ]{62E10}
\kwd[; secondary ]{60E05}
\end{keyword}

\begin{keyword}
\kwd{Gamma function}
\kwd{Gaussian distribution}
\kwd{value at the mode}
\kwd{Student $t$ distribution}
\kwd{tail weight}
\end{keyword}
\end{frontmatter}

\section{Foreword.}

One of the first things we learn about Student $t$ distributions is the fact that, when the degrees of freedom $\nu$ tend to infinity, we retrieve the Gaussian distribution, which  has the lightest tails in the Student family of distributions. It is also well-known that the mode of both distributions lies at their center of symmetry, entailing that the value at the mode simply coincides with the corresponding Student $t$ and Gaussian normalizing constants, respectively, and that this value $c_{\nu,k}$ of a $k$-dimensional Student $t$ distribution with $\nu$ degrees of freedom tends to $c_k:=\frac{1}{(2\pi)^{k/2}}$, the value at the mode of the $k$-dimensional Gaussian distribution. Now ask yourself the following question: does this convergence of $c_{\nu,k}$ to $c_k$ take place in a monotone way, and would the type of monotonicity (increasing/decreasing) be dimension-dependent? All the statisticians we have asked this question (including ourselves) were expecting monotonicity and nobody could imagine that the dimension $k$ could have an influence on the type of monotonicity, especially because everybody was expecting Gaussian distributions to always have the highest value at the mode, irrespective of the dimension. However, as we shall demonstrate in what follows, this general idea about the absence of effect by the dimension is wrong. Although the Student $t$ distribution is well-studied in the literature, this convergence has nowhere been established to the best of the authors' knowledge. This, combined with the recurrent astonishment when speaking about the dimension-dependent monotonicity, has led us to writing the present note.

\section{Introduction.}

Though already introduced by Helmert (1875), L\"uroth (1876) and Pearson (1895), the (univariate) $t$ distribution is usually attributed to William Sealy Gosset who, under the pseudonym \textit{Student}  in 1908 (see Student 1908), (re-)defined this probability distribution, whence the commonly used expression Student $t$ distribution. This terminology has been coined by Sir Ronald A. Fisher in Fisher (1925), a paper that has very much contributed to making the $t$ distribution well-known. This early success has motivated researchers to generalize the $t$ distribution to higher dimensions; the resulting multivariate $t$ distribution has been studied, \emph{inter alia}, by Cornish (1954) and Dunnett and Sobel (1954). The success story of the Student $t$ distribution yet went on, and nowadays it is one of the most commonly used absolutely continuous  distributions in statistics and probability. It arises in many situations, including \mbox{e.g.} the Bayesian analysis, estimation, hypotheses testing and modeling of financial data. For a review on the numerous theoretical results and statistical aspects, we refer to Johnson \emph{et al.}~(1994) for the one-dimensional and to Johnson and Kotz (1972) for the multi-dimensional setup.

Under its most common form, the $k$-dimensional $t$ distribution admits the density  
$$f_{\nu}^k(\mathbf{x}):=c_{\nu,k} \left( 1+ \|\mathbf{x}\|^2/\nu\right)^{-\frac{\nu+k}{2}}, \quad \mathbf{x} \in \mathbb{R}^k,$$
with   tail weight parameter $\nu\in\mathbb{R}_0^+$ and  normalizing constant  $$c_{\nu,k}=\frac{\Gamma(\frac{\nu+k}{2})}{(\pi \nu)^{k/2} \Gamma( \frac{\nu}{2})},$$
where the Gamma function is  defined by $\Gamma(z)=\int_{0}^\infty \exp(-t)t^{z-1}\, \mathrm{d}t$. The kurtosis of the $t$ distribution is of course regulated by the parameter $\nu$: the smaller $\nu$, the heavier the tails. For instance, for $\nu=1$, we retrieve the fat-tailed Cauchy distribution. As already mentioned, of particular interest is the limiting case when $\nu$ tends to infinity, which yields the multivariate Gaussian distribution with density
$$
(2\pi)^{-k/2} \exp \left( -\frac{1}{2} \|\mathbf{x}\|^2\right), \quad \mathbf{x} \in \mathbb{R}^k.
$$
The $t$ model  thus embeds the Gaussian distribution into a parametric class of fat-tailed distributions. Indeed, basic calculations show that 
$$\lim_{\nu\rightarrow\infty}\left( 1+ \|\mathbf{x}\|^2/\nu\right)^{-\frac{\nu+k}{2}}= \exp \left( -\frac{1}{2} \|\mathbf{x}\|^2\right)$$ 
and $\lim_{\nu\rightarrow\infty}c_{\nu,k}=(2\pi)^{-k/2}=c_k$. It is to be noted that $c_{\nu,k}$ and $c_k$ respectively correspond to the value at the mode (that is, at the origin) of $k$-dimensional Student $t$ and Gaussian distributions.

In the next section, we shall prove that $c_{\nu,k}$ converges monotonically towards $c_k$, in accordance with the general intuition, but, as we shall see, this monotonicity heavily depends on the dimension $k$: in dimension $k=1$, $c_{\nu,k}$ increases to $c_k$, in dimension $k=2$ we have that $c_{\nu,2}=c_2$ while for $k\geq3$ $c_{\nu,k}$ decreases towards $c_k$. Stated otherwise, the probability mass around the center increases with $\nu$ in the one-dimensional case whereas it decreases in higher dimensions, an \emph{a priori} unexpected fact in view of the interpretation of $\nu$ as tail-weight parameter. It is all the more surprising as the variance of each marginal Student $t$ distribution equals $\nu/(\nu-2)$ for $\nu>2$ and any dimension $k$, which is strictly larger than 1, the variance of the Gaussian marginals. An attempt for an explanation hereof is  provided, raising the question whether this is a curiosity or not.


\section{The (curious?) monotonicity result.}\label{main}

Before establishing our monotonicity results, let us start by  introducing some notations that will be useful in the sequel. To avoid repetition, let us mention that the subsequent formulae are all valid for $x\in\mathbb{R}_0^+$. Denoting by $D_x: =D^{(1)}_x$ the first derivative, define
$$\psi(x)=D_{x} \log(\Gamma(x)),$$
the so-called \textit{digamma function} or  \textit{first polygamma function}. The well-known functional equation
\begin{align}\label{Gam}
\Gamma(x+1)=x\,\Gamma(x)
\end{align}
 thus allows to obtain, by taking logarithms and differentiating,
\begin{align}\label{digam}
\psi(x+1)=\psi(x)+\frac{1}{x}.
\end{align}
Another interesting and useful formula is the series representation of the derivatives of $\psi$:
\begin{align}\label{deriv}
D_x^{(n)} \psi(x)=\sum_{j=0}^\infty \frac{(-1)^{n+1} n!}{(x+j)^{n+1}}, \quad n\geq 1.
\end{align}
For on overview and proofs of these results, we refer to Artin (1964). \vspace{0.3cm}

Now, with these notations in hand, we are ready to state the main result of this paper, namely the announced monotonicity result of the normalizing constants $c_{\nu,k}$.

\begin{theorem}[Dimension-based monotonicity of the normalizing constants in $t$ distributions]\label{theo}
For $k \in \mathbb{N}_0$, define the mapping $g_k: \mathbb{R}^+_0\to \mathbb{R}^+$ by $g_k(\nu)=\frac{\Gamma(\frac{\nu+k}{2})}{(\pi \nu)^{k/2} \Gamma( \frac{\nu}{2})}(=c_{\nu,k})$. We have
\begin{enumerate}[(i)]
\item if $k=1$, $g_k(\nu)$ is monotonically increasing in $\nu$;
\item if $k=2$, $g_k(\nu)$ is constant in $\nu$;
\item if $k\geq3$, $g_k(\nu)$ is monotonically decreasing in $\nu$.
\end{enumerate}
\end{theorem}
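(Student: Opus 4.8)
\medskip
\noindent\textbf{Proof plan.}
The plan is to differentiate $\log g_k$, reduce the sign of $g_k'$ to a statement about the digamma function, kill the integer shifts with the functional equation~\eqref{digam}, and reduce everything to a single sharp two-sided estimate in dimension one. Concretely, writing $\log g_k(\nu)=\log\Gamma\!\left(\tfrac{\nu+k}{2}\right)-\tfrac{k}{2}\log(\pi\nu)-\log\Gamma\!\left(\tfrac{\nu}{2}\right)$ and differentiating gives $g_k'(\nu)/g_k(\nu)=\tfrac12 h_k(\nu)$, where
\[
h_k(\nu):=\psi\!\left(\tfrac{\nu+k}{2}\right)-\psi\!\left(\tfrac{\nu}{2}\right)-\tfrac{k}{\nu}.
\]
Since $g_k>0$ on $\mathbb{R}^+_0$, the sign of $g_k'$ is the sign of $h_k$, so the whole theorem is a sign analysis of $h_k$.

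Using $\psi(x+m)=\psi(x)+\sum_{i=0}^{m-1}(x+i)^{-1}$ (iterating~\eqref{digam}), I would peel off the integer part of $k/2$. In the even case $k=2m$ this yields $h_{2m}(\nu)=\sum_{i=1}^{m-1}\!\bigl(\tfrac{2}{\nu+2i}-\tfrac{2}{\nu}\bigr)$, which is $0$ for $m=1$ (giving~(ii)) and strictly negative for $m\ge2$ since every summand is negative (giving~(iii) for even $k$). In the odd case $k=2m+1$ the same peeling, applied after $(\nu+2m+1)/2=(\nu+1)/2+m$, gives
\[
h_{2m+1}(\nu)=h_1(\nu)-\sum_{i=0}^{m-1}\frac{2(2i+1)}{\nu(\nu+2i+1)} .
\]
Thus everything is reduced to understanding $h_1$.

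The key claim is then the two-sided bound
\[
0<h_1(\nu)\le\frac{1}{\nu(\nu+1)}\qquad\text{for all }\nu>0 .
\]
The left inequality is precisely assertion~(i); together with the displayed reduction it also gives, for every $m\ge1$, $h_{2m+1}(\nu)\le h_1(\nu)-\tfrac{2}{\nu(\nu+1)}\le-\tfrac{1}{\nu(\nu+1)}<0$, i.e.~(iii) for odd $k\ge3$. To prove the claim I would pass to a standard integral representation: starting from the series form of $\psi'$ in~\eqref{deriv} (equivalently, from $\psi(z)=-\gamma+\sum_{j\ge0}\bigl(\tfrac1{j+1}-\tfrac1{j+z}\bigr)$), one gets $\psi\!\left(\tfrac{\nu+1}{2}\right)-\psi\!\left(\tfrac{\nu}{2}\right)=2\int_0^1 t^{\nu-1}(1+t)^{-1}\,\mathrm{d}t$, and since $\tfrac1\nu=\int_0^1 t^{\nu-1}\,\mathrm{d}t$ this collapses to
\[
h_1(\nu)=\int_0^1\frac{t^{\nu-1}(1-t)}{1+t}\,\mathrm{d}t .
\]
Positivity is immediate from nonnegativity of the integrand on $(0,1)$, and the upper bound follows from $\tfrac{1}{1+t}\le1$, giving $h_1(\nu)\le\int_0^1 t^{\nu-1}(1-t)\,\mathrm{d}t=\tfrac1\nu-\tfrac1{\nu+1}=\tfrac1{\nu(\nu+1)}$.

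The only genuinely delicate point is this sharp estimate on $h_1$: positivity alone is worthless for the odd dimensions $k\ge3$, where one needs $h_1(\nu)$ to stay below $\tfrac{2}{\nu(\nu+1)}$ \emph{uniformly} in $\nu$, and a crude termwise bound on the alternating series $2\sum_{n\ge0}(-1)^n(\nu+n)^{-1}-\tfrac1\nu$ for $h_1$ is asymptotically too weak as $\nu\to\infty$; routing the upper bound through the integral representation, where $\tfrac1{1+t}\le1$ settles it in one line, is what makes the odd case go through. The remaining ingredients (the exact telescoping after applying~\eqref{digam}, and the interchange of summation and integration in deriving the integral representation) are routine.
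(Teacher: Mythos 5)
Your proposal is correct, and it reaches the result by a route that differs from the paper's in its key mechanism, though the two arguments are structurally parallel. Like the paper, you reduce everything to the sign of $h_k(\nu)=\psi\bigl(\tfrac{\nu+k}{2}\bigr)-\psi\bigl(\tfrac{\nu}{2}\bigr)-\tfrac{k}{\nu}$; your telescoping of the digamma recurrence (\ref{digam}) then handles even $k$ exactly as the paper's iterated use of (\ref{Gam}) does (the empty sum giving $k=2$, strictly negative terms giving even $k\geq4$), and your closed-form identity $h_{2m+1}=h_1-\sum_{i=0}^{m-1}\tfrac{2(2i+1)}{\nu(\nu+2i+1)}$ replaces the paper's induction on odd $k$ with base case $k=3$ — essentially the same telescoping, unrolled rather than inducted. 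The genuine difference is how the crucial two-sided estimate $0<h_1(\nu)\leq\tfrac{1}{\nu(\nu+1)}$ is obtained: the paper derives both inequalities from concavity of $\psi$ (the midpoint inequality applied at $\tfrac{\nu+1}{2}$ gives the lower bound, and applied at $\tfrac{\nu}{2}+1$ in the $k=3$ base case gives precisely your upper bound), staying entirely within the series representation (\ref{deriv}); you instead pass to the Nielsen-beta integral representation $h_1(\nu)=\int_0^1 t^{\nu-1}(1-t)(1+t)^{-1}\,\mathrm{d}t$, where both bounds are immediate. The paper's route is slightly more self-contained (no interchange of summation and integration to justify, only monotonicity and concavity of $\psi$ read off from (\ref{deriv})); yours is more unified — one identity plus one lemma covers (i), (ii) and (iii) simultaneously — and the integral form makes the positivity and the sharp upper bound, hence also the rate of convergence of $c_{\nu,1}$, transparent in one line. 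The interchange step you flag as routine is indeed standard, so there is no gap.
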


\begin{proof}[Proof]
(i) Basic calculus manipulations show that 
$$D_\nu (\log g_1(\nu))=\frac{1}{2}\left(\psi \left(\frac{\nu+1}{2}\right)-\psi\left(\frac{\nu}{2}\right)-\frac{1}{\nu}\right),$$
with $\psi$ the digamma function. By (\ref{deriv}), we know that 
$$D_x \psi(x)=\sum_{j=0}^\infty \frac{1}{(x+j)^2} \quad \textnormal{and} \quad D^2_x \psi(x)=-2 \sum_{j=0}^\infty \frac{1}{(x+j)^3}.$$
Thus, $\psi$ is an increasing and concave function on $\mathbb{R}^+_0$.  Using concavity together with identity (\ref{digam}), we have in particular
$$\psi \left(\frac{\nu+1}{2}\right)\geq \frac{1}{2}\psi \left( \frac{\nu}{2} \right) +\frac{1}{2} \psi \left(\frac{\nu}{2}+1\right) = \psi \left(\frac{\nu}{2}\right)+\frac{1}{\nu}.$$
This inequality readily allows us to deduce that $\log g_1(\nu)$, and hence $g_1(\nu)$, is monotonically increasing in $\nu$.

(ii)  If $k=2$, the function $g_2(\nu)$  reduces to $\frac{1}{2 \pi}$ by simply applying (\ref{Gam}), in other words it equals its limit, whence the claim.

(iii) Assume first that $k\geq3$ is even, hence that $k/2$ is an integer. Using iteratively identity (\ref{Gam}), we can write
$$\Gamma\left( \frac{\nu}{2}+\frac{k}{2}\right)= \underbrace{\left( \frac{\nu}{2}+\frac{k}{2}-1\right)\left(\frac{\nu}{2}+\frac{k}{2}-2\right) \cdots \frac{\nu}{2}}_{\frac{k}{2}\textnormal{ factors}} \Gamma\left(\frac{\nu}{2}\right),$$
which implies
$$g_k(\nu)=\frac{\Gamma(\frac{\nu+k}{2})}{(\pi \nu)^{k/2} \Gamma( \frac{\nu}{2})}=\pi^{-k/2} \left(\frac{1}{2}+\frac{k/2-1}{\nu}\right)\left(\frac{1}{2}+\frac{k/2-2}{\nu}\right) \cdots \left(\frac{1}{2}+\frac{1}{\nu}\right)\frac{1}{2}.$$
Since $a+b/\nu$ is monotonically decreasing in $\nu$ when $b>0$, $g_k(\nu)$ happens to be the product of monotonically decreasing  and positive functions in $\nu$. Thus $g_k(\nu)$ is itself monotonically decreasing in $\nu$, which allows to conclude for  $k$ even. 

Now assume that $k\geq3$ is odd. We set $k=2m+1$ with $m \in \mathbb{N}_0$. The proof is based on the same idea as the proof for the one-dimensional case. One easily sees that
\begin{align}\label{odd}D_\nu (\log g_k(\nu))=\frac{1}{2}\left(\psi \left(\frac{\nu+k}{2}\right)-\psi\left(\frac{\nu}{2}\right)-\frac{k}{\nu}\right),\end{align}
with $\psi$ the digamma function. In the rest of this proof, we establish the monotonicity of $g_k(\nu)$ by proving  by induction on $k$ (respectively, on $m$) that $D_\nu (\log g_k(\nu)) \leq 0$ for all $\nu\in\mathbb{R}_0^+$. \vspace{0.3cm}

\noindent \textbf{Base case}: If $m=1$ (which implies $k=3$),  identity (\ref{digam}) yields $\psi(\nu/2)=\psi(\nu/2 +1)-2/\nu$, hence (\ref{odd}) can be rewritten as
\begin{align}\label{3}\frac{1}{2} \left(\psi \left(\frac{\nu+3}{2}\right)- \psi \left(\frac{\nu}{2}+1\right)-\frac{1}{\nu}\right).\end{align}
By concavity of the digamma function, we have the  inequality $$\psi  \left(\frac{\nu}{2}+1\right) \geq \frac{1}{2} \left( \psi \left(\frac{\nu+3}{2}\right)+\psi \left(\frac{\nu+1}{2}\right) \right),$$ and thus (\ref{3}) can be bounded by
$$\frac{1}{2} \left( \frac{1}{2} \left(\psi \left(\frac {\nu+3}{2}\right)-\psi \left( \frac{\nu+1}{2}\right) \right)-\frac{1}{\nu}\right)=\frac{1}{2}\left(\frac{1}{\nu+1}-\frac{1}{\nu}\right)=-\frac{1}{2\nu (\nu+1)}<0,$$
where we have again used (\ref{digam}). So $D_\nu (\log g_3(\nu)) \leq0$ for all $\nu\in\mathbb{R}_0^+$, and the claim holds for the base case.
\vspace{0.3cm}

\noindent \textbf{Induction case}: Assume that the expression in (\ref{odd}) is negative for $k=2m+1$ with $m\in\mathbb{N}_0$. We now show that the claim is true for $k'=2(m+1)+1=k+2$. It follows once more from (\ref{digam}), combined with the fact that $\nu+k\geq \nu$, that
\begin{align*}
\psi \left(\frac{\nu+k'}{2}\right)-\psi\left(\frac{\nu}{2}\right)-\frac{k'}{\nu} =& \,\,\psi \left(\frac{\nu+k}{2}+1\right)-\psi\left(\frac{\nu}{2}\right)-\frac{k}{\nu}-\frac{2}{\nu}\\
=& \,\,\frac{2}{\nu+k}+\psi \left( \frac{\nu+k}{2}\right)-\psi\left(\frac{\nu}{2}\right)-\frac{k}{\nu}-\frac{2}{\nu}\\
\le & \,\,\psi \left( \frac{\nu+k}{2}\right)-\psi\left(\frac{\nu}{2}\right)-\frac{k}{\nu}\\
\le & \,\,0,
\end{align*}
where the final inequality is due to the induction hypothesis. Thus $D_\nu (\log g_k(\nu)) \le 0$ for all odd $k\geq3$, which concludes the proof. 
\end{proof}

\begin{remark}
One easily sees that the induction-based proof also holds under slight modifications for $k\geq4$ even, but we prefer to show this shorter proof for the even-$k$ case. 
\end{remark}

\begin{figure}
\begin{center}
\begin{minipage}{100mm}
\subfigure[]{
\resizebox*{5cm}{!}{\includegraphics{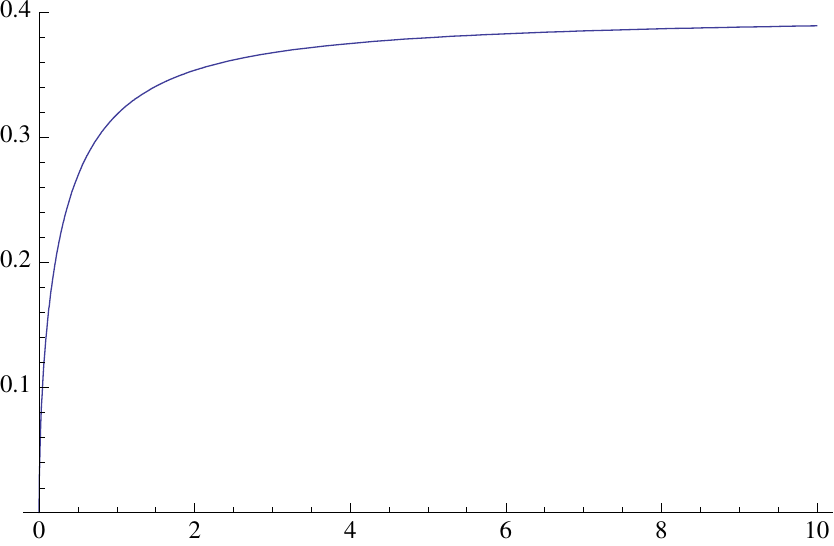}}}%
\subfigure[]{
\resizebox*{5cm}{!}{\includegraphics{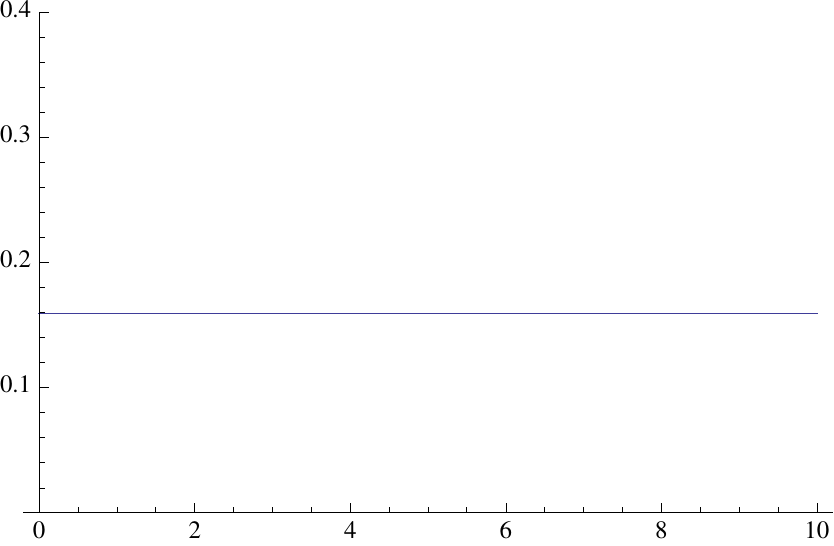}}}\\%
\subfigure[]{
\resizebox*{5cm}{!}{\includegraphics{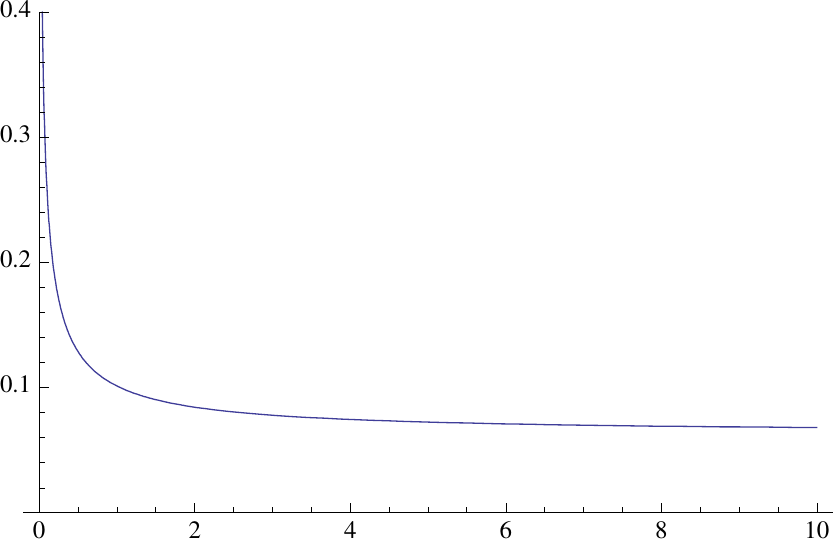}}}%
\subfigure[]{
\resizebox*{5cm}{!}{\includegraphics{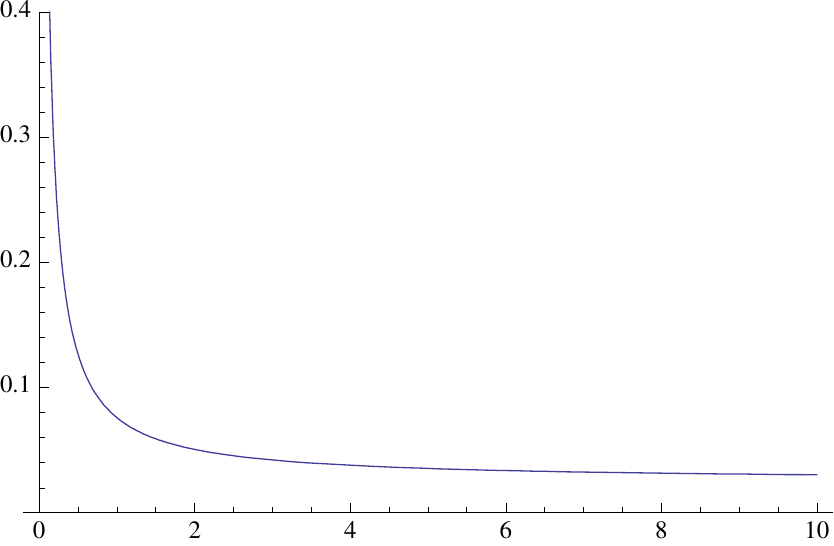}}}\\%
\caption{Plots of $g_k(\nu)=c_{\nu,k}$ for (a) $k=1$, (b) $k=2$, (c) $k=3$, and (d) $k=4$.} \label{constants}
\end{minipage}
\end{center}
\end{figure}

For the sake of illustration, we provide in Figure~\ref{constants} the curves of the values at the mode  $g_k(\nu)=c_{\nu,k}$ for $k=1,2,3$ and $4$. The respective asymptotics of course correspond to the respective limits $(2\pi)^{-k/2}$. While the monotone convergence of the values $c_{\nu,k}$ to $(2\pi)^{-k/2}$ is by no means surprising, the fact that this monotonicity changes from increasing in dimension $k=1$ to decreasing in dimensions $k\geq3$ whilst being constant in dimension $k=2$ seems at first sight, as already mentioned previously, puzzling, as one would expect the Gaussian to always have the highest peak at the center. That this is not the case is illustrated in Figure~\ref{density}, where we have plotted, for distinct values of $k$, several Student $t$ densities with increasing degrees of freedom $\nu$. In dimension $k=1$, the Gaussian density has the highest peak, in dimension $k=2$ all peaks have the same value, whereas for $k\geq3$ the Gaussian has the lowest peak. The same conclusion thus also holds true for the probability mass around the center, which increases with $\nu$ in dimension 1 and decreases for higher dimensions (whilst being nearly unchanged in dimension 2); see Table~\ref{tab}.

\begin{figure}
\begin{center}
\begin{minipage}{140mm}
\subfigure[]{
\resizebox*{7cm}{!}{\includegraphics{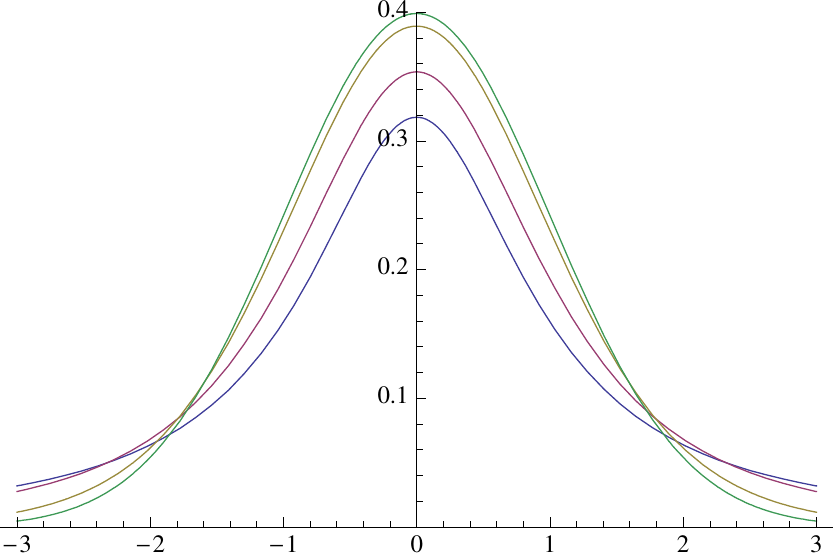}}}%
\subfigure[]{
\resizebox*{7cm}{!}{\includegraphics{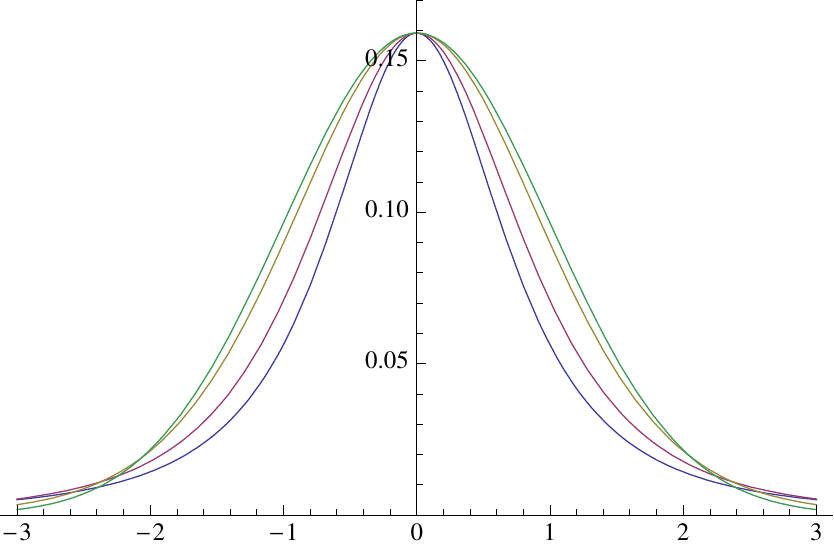}}}\\%
\subfigure[]{
\resizebox*{7cm}{!}{\includegraphics{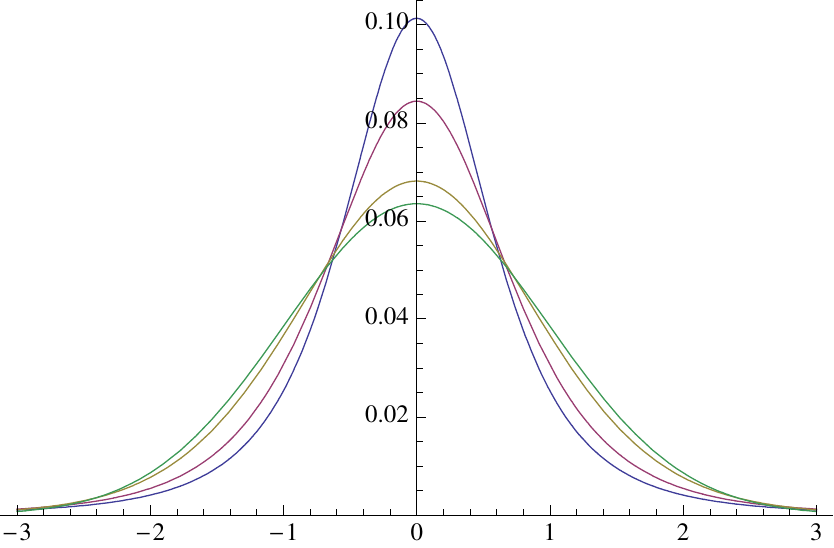}}}%
\subfigure[]{
\resizebox*{7cm}{!}{\includegraphics{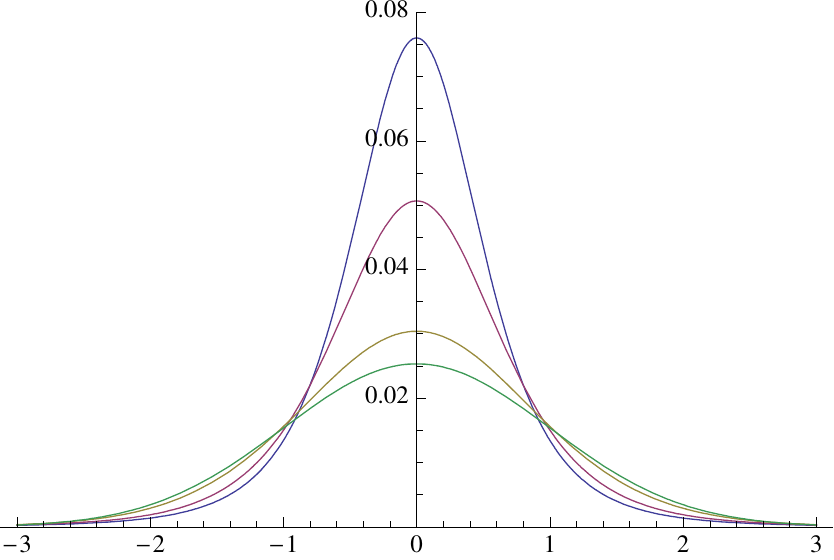}}}\\%
\caption{Plots of the $k$-variate $t$ densities $c_{\nu,k}\left( 1+ \|\mathbf{x}\|^2/\nu\right)^{-\frac{\nu+k}{2}}$ for dimensions (a) $k=1$, (b) $k=2$, (c) $k=3$, and (d) $k=4$. Within each sub-figure, we have chosen four values for the tail parameter $\nu$: $1$ (blue curves), $2$ (red curves), $10$ (yellow curves) and $\infty$ (green curves). From dimension $2$ onwards, the densities are plotted along the first vector of the canonical basis of $\mathbb{R}^k$.} \label{density}
\end{minipage}
\end{center}
\end{figure}

\begin{table} 
\begin{center}
\begin{tabular}{|lc|cccc|}
\hline
&& $k=1$&$k=2$&$k=3$&$k=4$\\\hline
$\nu=1$ & & 0.063451 &0.00496281 &0.000419374& 0.0000368831   \\
$\nu=2$ && 0.070535 &0.00497512 & 0.000350918 &0.0000247519 \\
$\nu=10$&& 0.077679 &0.00498503& 0.000284236& 0.0000149302 \\
$\nu=\infty$&& 0.079656 &0.00498752& 0.000265165& 0.0000124584  \\
\hline
\end{tabular}
\caption{Probabilities that  ${\pmb X}\sim f_\nu^k$  lies inside the $k$-dimensional ball with radius 0.1, for four distinct values of the dimension $k$ and four distinct values of the number of degrees of freedom $\nu$, with $\nu=\infty$ standing for the Gaussian distribution.} \label{tab}
\end{center}
\end{table}

Why does this result seem so counter-intuitive? One reason is that, since the kurtosis of Student $t$ distributions decreases when $\nu$ increases, one would naturally expect that the probability mass around the center always increases with $\nu$, all the more so as the variance-covariance of a $k$-dimensional Student distribution equals, for $\nu>2$, $\frac{\nu}{\nu-2}I_k$, with $I_k$ the $k$-dimensional identity matrix, hence the marginals' variance is always larger as that of the Gaussian marginals. 

This result is even more astonishing as the moment-based kurtosis ratio between two $t$ distributions does not alter with the dimension. Indeed, letting, without loss of generality, $\mathbf{X}_1$ and $\mathbf{X}_2$ be  $k$-variate random vectors following each a $t$ distribution with respective parameters $\nu_1$ and $\nu_2$, straightforward calculations show that, for $m<\min(\nu_1,\nu_2)$,
\begin{align*}
\frac{{\rm E}[||\mathbf{X}_1||^m]}{{\rm E}[||\mathbf{X}_2||^m]}& = \frac{\int_0^\infty r^{m+k-1}c_{\nu_1,k}( 1+ r^2/\nu_1)^{-\frac{\nu_1+k}{2}}dr}{\int_0^\infty r^{m+k-1}c_{\nu_2,k}(1+r^2/\nu_2)^{-\frac{\nu_2+k}{2}}dr}\\
&= \frac{\frac{\nu_1^{m/2}\Gamma(\frac{k+m}{2})\Gamma(\frac{\nu_1-m}{2})}{2\pi^{k/2}\Gamma(\frac{\nu_1}{2})}}{\frac{\nu_2^{m/2}\Gamma(\frac{k+m}{2})\Gamma(\frac{\nu_2-m}{2})}{2\pi^{k/2}\Gamma(\frac{\nu_2}{2})}}\\
&=\left(\frac{\nu_1}{\nu_2}\right)^{m/2}\frac{\Gamma(\frac{\nu_1-m}{2})\Gamma(\frac{\nu_2}{2})}{\Gamma(\frac{\nu_2-m}{2})\Gamma(\frac{\nu_1}{2})},
\end{align*}
which does not depend on the dimension $k$. This result can be summarized in the following proposition.
\begin{proposition}\label{prop}
Let $\mathbf{X}_1$ and $\mathbf{X}_2$ be  $k$-variate random vectors following each a $t$ distribution with respective parameters $\nu_1$ and $\nu_2$. Then, for all $m<\min(\nu_1,\nu_2)$, the ratio $\frac{{\rm E}[||\mathbf{X}_1||^m]}{{\rm E}[||\mathbf{X}_2||^m]}$ does not depend on the dimension $k$. Consequently, the moment-based kurtosis ratio or fourth standardized moment ratio 
$$\frac{\beta_{\nu_1,k}}{\beta_{\nu_2,k}}:=\frac{\frac{{\rm E}[||\mathbf{X}_1||^4]}{({\rm E}[||\mathbf{X}_1||^2])^2}}{\frac{{\rm E}[||\mathbf{X}_2||^4]}{({\rm E}[||\mathbf{X}_2||^2])^2}}$$
is the same for each dimension $k$, provided that $\min(\nu_1,\nu_2)>4$.
\end{proposition}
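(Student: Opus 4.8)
The plan is to compute the two moments $\mathrm{E}[\|\mathbf{X}\|^m]$ for a $k$-variate $t$ random vector with parameter $\nu$ directly, using polar coordinates, and observe that the dimension $k$ enters only through a factor that is common to numerator and denominator. First I would write $\mathrm{E}[\|\mathbf{X}\|^m]=\int_{\mathbb{R}^k}\|\mathbf{x}\|^m c_{\nu,k}(1+\|\mathbf{x}\|^2/\nu)^{-(\nu+k)/2}\,\mathrm{d}\mathbf{x}$ and pass to polar coordinates, so that the angular integration yields the surface area $\omega_{k-1}$ of the unit sphere in $\mathbb{R}^k$ and we are left with $\mathrm{E}[\|\mathbf{X}\|^m]=c_{\nu,k}\,\omega_{k-1}\int_0^\infty r^{m+k-1}(1+r^2/\nu)^{-(\nu+k)/2}\,\mathrm{d}r$, exactly the one-dimensional integral displayed before the proposition.

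Next I would evaluate that radial integral by the substitution $u=r^2/\nu$ (or equivalently recognising it as a Beta integral): it equals $\tfrac{1}{2}\nu^{(m+k)/2}B\!\left(\tfrac{m+k}{2},\tfrac{\nu-m}{2}\right)=\tfrac{1}{2}\nu^{(m+k)/2}\frac{\Gamma(\frac{m+k}{2})\Gamma(\frac{\nu-m}{2})}{\Gamma(\frac{\nu+k}{2})}$, valid precisely when $m<\nu$ (so that the Beta integral converges). Multiplying by $c_{\nu,k}=\frac{\Gamma(\frac{\nu+k}{2})}{(\pi\nu)^{k/2}\Gamma(\frac{\nu}{2})}$ and by $\omega_{k-1}=\frac{2\pi^{k/2}}{\Gamma(k/2)}$, the factors $\Gamma(\frac{\nu+k}{2})$ cancel and one is left with
$$\mathrm{E}[\|\mathbf{X}\|^m]=\nu^{m/2}\,\frac{\Gamma(\frac{m+k}{2})\,\Gamma(\frac{\nu-m}{2})}{\Gamma(\frac{k}{2})\,\Gamma(\frac{\nu}{2})},$$
which is the closed form already quoted in the excerpt. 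Forming the ratio $\mathrm{E}[\|\mathbf{X}_1\|^m]/\mathrm{E}[\|\mathbf{X}_2\|^m]$ for parameters $\nu_1,\nu_2$, every $k$-dependent factor — namely $\Gamma(\frac{m+k}{2})/\Gamma(\frac{k}{2})$ — appears identically in numerator and denominator and cancels, leaving $(\nu_1/\nu_2)^{m/2}\,\frac{\Gamma(\frac{\nu_1-m}{2})\Gamma(\frac{\nu_2}{2})}{\Gamma(\frac{\nu_2-m}{2})\Gamma(\frac{\nu_1}{2})}$, independent of $k$. This proves the first assertion for all $m<\min(\nu_1,\nu_2)$.

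For the second assertion I would simply specialise: taking $m=4$ gives the fourth absolute moment ratio, and taking $m=2$ twice (and squaring) gives the ratio of squared second moments; dividing, the kurtosis ratio $\beta_{\nu_1,k}/\beta_{\nu_2,k}$ is a product and quotient of ratios each already shown to be $k$-free, hence is itself $k$-free, provided all the involved moments exist, i.e. $\min(\nu_1,\nu_2)>4$. The only genuine point requiring care is the convergence/existence bookkeeping — one must check the Beta integral converges exactly when $m<\nu$, so that the formula and the subsequent cancellation are legitimate — but this is routine and I expect no real obstacle; the computation is essentially a bookkeeping exercise in Gamma-function identities.
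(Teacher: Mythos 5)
Your proposal is correct and follows essentially the same route as the paper: pass to polar coordinates, evaluate the radial integral as a Beta integral to get $\nu^{m/2}\Gamma(\frac{m+k}{2})\Gamma(\frac{\nu-m}{2})$ up to $k$-dependent but $\nu$-free factors, and observe that all $k$-dependence cancels in the ratio, with the kurtosis statement following by specializing $m=4$ and $m=2$. The only cosmetic difference is that you keep the surface-area factor $\omega_{k-1}$ explicit while the paper suppresses it from the outset since it cancels in the ratio.
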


This proposition seems to add further confusion about the result of Theorem~\ref{theo}. Why is our intuition so misleading? The reason lies most probably  in our general understanding of heavy tails in high dimensions. While, in dimension 1, one can clearly observe the tail-weight by looking at the density curves far from the origin, this visualization vanishes more and more with the dimension. This waning difference is however thwarted by the increase in dimension, in the sense that the smaller difference in height between the density curves is integrated over a larger domain, which explains for instance  the kurtosis ratio result of Proposition~\ref{prop}. It also explains why, in higher dimensions, the lighter-tailed distributions need not have the highest peaks at the mode, as this difference in peak has weak importance in view of the small domain over which it is integrated compared to the tails.

The dimension thus has an impact on the monotone convergence of the values at the mode $c_{\nu,k}$ of multivariate Student $t$ distributions towards the value at the mode $c_k$ of multivariate Gaussian distributions. Curiosity or not?

\vspace{0.5cm}

\noindent ACKNOWLEDGEMENTS: \vspace{0.2cm}

\noindent The research of Christophe Ley is supported by a Mandat de Charg\'e de Recherche du Fonds National de la Recherche Scientifique, Communaut\'e fran\c caise de Belgique. The research of Anouk Neven is supported 
 by an AFR grant of the Fonds National de la Recherche, Luxembourg (Project Reference 4086487). Both authors thank Davy Paindaveine for interesting discussions on this special topic.

\end{document}